\newtheorem{theorem}{Theorem}[section]
\newtheorem{lemma}[theorem]{Lemma}
\newtheorem{corollary}[theorem]{Corollary}
\theoremstyle{definition}
\newtheorem{definition}[theorem]{Definition}
\newtheorem{remark}[theorem]{Remark}
\newcommand{\upset}{\mathord\uparrow}
\newcommand{\downset}{\mathord\downarrow}
\keywords{Sober space; frame; Scott-open filter; compact saturated set; Hofmann-Mislove Theorem; Priestley duality}
\patchcmd{\@setaddresses}{\indent}{\noindent}{}{}
\patchcmd{\@setaddresses}{\indent}{\noindent}{}{}
\patchcmd{\@setaddresses}{\indent}{\noindent}{}{}
\patchcmd{\@setaddresses}{\indent}{\noindent}{}{}
\author{Guram Bezhanishvili and Sebastian Melzer}
\address{\newline
Department of Mathematical Sciences\newline
New Mexico State University\newline
Las Cruces, NM 88003\newline
USA\newline}
\email{guram@nmsu.edu}
\email{smelzer@nmsu.edu}
\setlist[enumerate,1]{label={\upshape(\arabic*)}}
\title{Hofmann-Mislove through the Lenses of Priestley}
\subjclass{18F70; 06D22}
\begin{document}

\begin{abstract}
We use Priestley duality to give a new proof of the Hofmann-Mislove Theorem.
\end{abstract}

\maketitle

\section{Introduction}

Let $X$ be a sober space and $L=\mathcal O(X)$ the frame of open subsets of $X$. The Hofmann-Mislove Theorem \cite{HofmannMislove1981} establishes that the poset of Scott-open filters of $L$ is isomorphic to the poset of compact saturated subsets of $X$.  
This classic result was proved in 1981 and turned out to be an extremely useful link between topology and domain theory. Several alternative proofs of the theorem have been established since then (see, e.g., \cite{Compendium2003}). Of these, the proof by Keimel and Paseka \cite{KeimelPaseka1994} is probably the most direct and widely accepted.

There is a similar result in Priestley duality for distributive lattices \cite{Priestley1970, Priestley1972}, which establishes that the poset of filters of a bounded distributive lattice $L$ is isomorphic to the poset of closed upsets of the Priestley space $X$ of $L$. A close look at the two proofs reveals striking similarities. Indeed, it was pointed out in \cite[Rem.~6.4]{BezhanishviliGabelaiaKurz2010} that the two results are equivalent in the setting of spectral spaces 
(see Section~\ref{sec 3} for details). 

In this paper  we show that we can use Priestley duality to prove the Hofmann-Mislove Theorem. In fact, we will prove a more general result that the poset ${\sf OFilt}(L)$ of Scott-open filters of an arbitrary frame $L$ is isomorphic to the poset of compact saturated subsets of the space of points of $L$. This we do by establishing that ${\sf OFilt}(L)$ is isomorphic to the poset of the special closed upsets of the Priestley space of $L$, which we term Scott-upsets (see Section~\ref{sec 5} for details). The Hofmann-Mislove Theorem is an immediate consequence. 

We point out that the Hofmann-Mislove Theorem in this generality was proved in \cite[Thm.~8.2.5]{Vickers1989} using Zorn's lemma.
Since our new approach relies on Priestley duality, we only need to use the Prime Ideal Theorem, which is weaker than Zorn's lemma.

\section{Priestley duality}\label{sec 2}

Let $X$ be a poset. As usual, for $S\subseteq X$, we write
\begin{align*}
{\uparrow}S &= \{x\in X : s \le x \mbox{ for some } s\in S\},\\ 
{\downarrow}S &= \{x\in X : x \le s \mbox{ for some } s\in S\}.
\end{align*}
Then $S$ is an {\em upset} if $S={\uparrow}S$ and a {\em downset} if $S={\downarrow}S$. If $S=\{x\}$, we write ${\uparrow}x$ and ${\downarrow}x$ instead of ${\uparrow}S$ and ${\downarrow}S$. 

Let $X$ be a topological space. A subset $U$ of $X$ is {\em clopen} if it is both closed and open, $X$ is {\em zero-dimensional} if $X$ has a basis of clopen sets, and $X$ is a {\em Stone space} if $X$ is compact, Hausdorff, and zero-dimensional. 

\begin{definition}
A \emph{Priestley space} is a pair $(X,\le)$ where $X$ is a Stone space and $\leq$ is a partial order on $X$ satisfying the \emph{Priestley separation axiom}:
\begin{center}
    If $x \not \leq y$, then there is a clopen upset $U$ such that $x \in U$ and $y \not \in U$.
\end{center}
\end{definition}

When it is clear from the context, we simply write $X$ for a Priestley space. Let $\sf Pries$ be the category of Priestley spaces and continuous order-preserving maps. Let also $\sf Dist$ be the category of bounded distributive lattices and bounded lattice homomorphisms.

\begin{theorem} [Priestley duality \cite{Priestley1970,Priestley1972}] 
$\sf Pries$ is dually equivalent to $\sf Dist$.
\end{theorem}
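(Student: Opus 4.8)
The plan is to establish the dual equivalence in the classical style of Stone-type dualities: write down the two contravariant functors and the natural isomorphisms witnessing the equivalence.

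\emph{The functors.} To a bounded distributive lattice $L$ I would associate the space $X_L$ of prime filters of $L$, ordered by inclusion and topologized with the sets $\varphi(a)=\{P\in X_L:a\in P\}$ together with their complements ($a\in L$) as a subbasis; a bounded lattice homomorphism $h\colon L\to M$ is sent to $X_h=h^{-1}\colon X_M\to X_L$. In the other direction, to a Priestley space $X$ I would associate the bounded distributive lattice $\mathsf{ClopUp}(X)$ of clopen upsets of $X$ under $\cup,\cap,\varnothing,X$, and a continuous order-preserving map $f\colon X\to Y$ to $f^{-1}\colon\mathsf{ClopUp}(Y)\to\mathsf{ClopUp}(X)$. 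The first routine checks are that $f^{-1}$ really lands among clopen upsets and preserves the lattice structure, and that $h^{-1}$ carries prime filters to prime filters and is continuous and order-preserving, both following from the identity $(h^{-1})^{-1}(\varphi(a))=\varphi(h(a))$.

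\emph{$X_L$ is a Priestley space.} Here $X_L$ is Hausdorff and zero-dimensional because the chosen subbasis consists of clopen sets separating distinct prime filters, and the Priestley separation axiom holds because $P\not\subseteq Q$ yields some $a\in P\setminus Q$, making $\varphi(a)$ a clopen upset with $P\in\varphi(a)\not\ni Q$. The point where set-theoretic strength enters is compactness: via the Alexander Subbasis Lemma a subbasic cover with no finite subcover produces a filter and an ideal of $L$ that are disjoint, and the Prime Ideal Theorem then separates them by a prime filter, contradicting that the family was a cover.

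\emph{The unit and counit.} I would define $\varepsilon_L\colon L\to\mathsf{ClopUp}(X_L)$ by $a\mapsto\varphi(a)$ and $\eta_X\colon X\to X_{\mathsf{ClopUp}(X)}$ by $x\mapsto\{U\in\mathsf{ClopUp}(X):x\in U\}$. Primality makes $\varepsilon_L$ a bounded lattice homomorphism; it is injective by the Prime Ideal Theorem (if $a\not\le b$, some prime filter contains $a$ and misses $b$); and it is surjective because a clopen upset, being compact, is a finite union of basic opens which, using that it is an upset and the Priestley axiom, can be rewritten as a single $\varphi(a)$. Dually, $\eta_X(x)$ is always a prime filter, $\eta_X$ is injective and order-reflecting by the Priestley axiom, and it is surjective because for a prime filter $\mathcal P$ of clopen upsets the family $\{U:U\in\mathcal P\}\cup\{X\setminus V:V\notin\mathcal P\}$ has the finite intersection property, so compactness of $X$ supplies a point $x$ in its intersection with $\eta_X(x)=\mathcal P$; finally $\eta_X^{-1}(\varphi(U))=U$ shows $\eta_X$ is a homeomorphism, hence an isomorphism of Priestley spaces. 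Naturality of both $\varepsilon$ and $\eta$ is then a direct diagram chase from $(h^{-1})^{-1}(\varphi(a))=\varphi(h(a))$ and its analogue for $f^{-1}$. I expect the genuine work to be concentrated in the two existence steps — surjectivity of $\eta_X$, resting on compactness of the given Priestley space, and injectivity of $\varepsilon_L$ together with compactness of $X_L$, which is precisely where the Prime Ideal Theorem is needed.
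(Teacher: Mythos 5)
Your proposal is the standard proof of Priestley duality via the prime-filter spectrum with the subbasis $\{\sigma(a)\}\cup\{\sigma(b)^c\}$ on one side and clopen upsets on the other, which is exactly the construction the paper recalls right after the theorem statement (the paper itself only cites Priestley's original articles rather than proving the result). The sketch is correct, uses no more than the Prime Ideal Theorem (via Alexander's subbasis lemma), and the only compressed step --- rewriting a clopen upset as a single $\varphi(a)$ --- goes through by the usual two-fold compactness argument you allude to.
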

 
We recall that for $D\in{\sf Dist}$, the Priestley space of $D$ is the set $X$ of prime filters of $D$ ordered by inclusion and topologized by the subbasis 
\[
\{ \sigma(a) : a \in D \} \cup \{ \sigma(b)^c : b \in D \},
\] 
where $\sigma:D \to \wp(X)$ is the Stone map $\sigma(a) = \{ x\in X : a \in x \}$ and $\wp(X)$ is the powerset of $X$. 
 
Each Priestley space comes equipped with two additional topologies: the topology $\tau_u$ of open upsets and the topology $\tau_d$ of open downsets. It is well known that clopen upsets form a basis for $\tau_u$. Since clopen upsets are exactly the compact opens of $\tau_u$, it follows that $(X,\tau_u)$ is a {\em coherent space} (the compact opens form a basis that is a bounded sublattice of the opens). Similarly, $(X,\tau_d)$ is a coherent space.

In addition, principal downsets ${\downarrow}x$ are exactly the join-irreducible elements in the lattice of closed downsets. Since closed downsets are the closed sets in $(X,\tau_u)$ and ${\downarrow}x$ is the closure of $\{x\}$ in $(X,\tau_u)$, we obtain that $(X,\tau_u)$ is a {\em sober space} (each closed irreducible set is the closure of a unique point). 

\begin{definition}
A topological space is {\em spectral} if it is sober and coherent. 
\end{definition}

Consequently, both $(X,\tau_u)$ and $(X,\tau_d)$ are spectral spaces. We call a map $f:X\to Y$ between spectral spaces a {\em spectral map} if the inverse image of each compact open in $Y$ is compact open in $X$. Let $\sf Spec$ be the category of spectral spaces and spectral maps. The assignment $X \mapsto (X,\tau_u)$ and $f \mapsto f$ defines a covariant functor from $\sf Pries$ to $\sf Spec$ which establishes that the two categories are isomorphic. 

\begin{theorem} [Cornish \cite{Cornish1975}] \label{thm: Cornish}
$\sf Pries$ is isomorphic to $\sf Spec$.
\end{theorem}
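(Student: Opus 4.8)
The plan is to construct an explicit inverse to the functor $F\colon {\sf Pries}\to{\sf Spec}$, $X\mapsto(X,\tau_u)$, $f\mapsto f$, described just above the statement. Define $G\colon {\sf Spec}\to{\sf Pries}$ on objects by sending a spectral space $Y$ to the triple $(Y,\pi,\le)$, where $\le$ is the specialization order of $Y$ (so $x\le y$ iff $x\in\overline{\{y\}}$, equivalently every compact open containing $x$ also contains $y$) and $\pi$ is the \emph{patch topology}, generated by the subbasis consisting of the compact opens of $Y$ together with their complements; on morphisms, put $G(g)=g$. That $G$ is well defined on morphisms is immediate: a spectral map has inverse images of compact opens compact open, hence is $\pi$-continuous, and every continuous map preserves the specialization order.

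Next I would verify that $G(Y)$ is genuinely a Priestley space. The compact opens and their complements are all $\pi$-clopen, so $\pi$ has a basis of clopen sets; and since $Y$ is $T_0$ (being sober), distinct points are separated by a compact open or its complement, so $(Y,\pi)$ is Hausdorff. The same observation gives the Priestley separation axiom: if $x\not\le y$, then, as the compact opens form a basis, some compact open $U$ has $x\in U$ and $y\notin U$, and $U$ is a $\pi$-clopen $\le$-upset. The remaining point---$\pi$-compactness---is the only substantial one. For this, let $D$ be the bounded distributive lattice of compact opens of $Y$ (a lattice because $Y$ is coherent, with bottom $\emptyset$ and top $Y$), let $X_D$ be the Priestley space of $D$, and consider the map $N\colon Y\to X_D$, $N(y)=\{U\in D : y\in U\}$. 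One checks that $N(y)$ is a prime filter of $D$; that $N$ is injective, using $T_0$-ness and the fact that compact opens form a basis; and that $N$ is surjective, using soberness: given a prime filter $F$ with complementary prime ideal $I$, the open set $O=\bigcup I$ is proper (otherwise $Y\in I$ by compactness, contradicting $Y\in F$), its complement $Y\setminus O$ is closed and irreducible, hence equals $\overline{\{y\}}$ for a unique $y$, and for that $y$ one has $N(y)=F$. Since $N^{-1}(\sigma(U))=U$ for every $U\in D$, the bijection $N$ carries the defining subbasis of $X_D$ exactly onto that of $(Y,\pi)$ and the order $\le$ onto inclusion of prime filters; thus $N$ is an isomorphism of ordered topological spaces, and in particular $(Y,\pi)$ is compact because $X_D$ is. This is the step where the Prime Ideal Theorem is used, namely through Priestley duality for $D$.

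It then remains to see that $F$ and $G$ are mutually inverse. For $G\circ F$: given a Priestley space $X$, the specialization order of $(X,\tau_u)$ is $\le$ because $\overline{\{x\}}={\downarrow}x$ in $\tau_u$, and the patch topology of $(X,\tau_u)$ is generated by the compact opens of $\tau_u$, i.e.\ the clopen upsets, together with their complements, i.e.\ the clopen downsets---and clopen upsets and clopen downsets together generate exactly the original Priestley topology of $X$. Hence $G(F(X))=X$, and since both functors are the identity on morphisms, $G\circ F$ is the identity. For $F\circ G$: applying $F$ to the isomorphism $N\colon G(Y)\to X_D$ shows $F(G(Y))\cong F(X_D)$, and the identity $N^{-1}(\sigma(U))=U$, together with the fact that the sets $\sigma(U)$ ($U\in D$) form a basis for $\tau_u$ on $X_D$ while the sets $U$ form a basis for the original topology of $Y$, shows that $N$ is also an isomorphism $Y\cong F(X_D)$ in ${\sf Spec}$. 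Composing, $F(G(Y))=Y$, and again $F\circ G$ is the identity. Therefore ${\sf Pries}$ and ${\sf Spec}$ are isomorphic.

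I expect the surjectivity of $N$---the passage from a prime filter of $D$ back to an honest point of $Y$, via the irreducibility of $Y\setminus O$ and soberness---to be the technical heart of the proof, with the remaining verifications being routine unwindings of definitions together with an appeal to Priestley duality for the compactness of $X_D$.
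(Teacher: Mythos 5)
Your argument is essentially correct, but note that the paper does not prove this statement at all: it records it as Cornish's theorem with a citation, and its only contribution is the observation in the preceding paragraphs that $(X,\tau_u)$ is spectral (coherent because clopen upsets are the compact opens and form a basis; sober because closed irreducible sets in $\tau_u$ are the principal downsets), so that $X\mapsto(X,\tau_u)$, $f\mapsto f$ is a well-defined functor. What you supply is the converse half that the paper leaves to the reference: the inverse functor via the patch topology and the specialization order, with patch-compactness obtained by transporting $Y$ onto the Priestley dual $X_D$ of its lattice of compact opens. This is the standard Cornish/Priestley argument, and your reduction of compactness to the compactness of $X_D$ (hence to the Prime Ideal Theorem) is exactly in the spirit of the paper's remark that its whole development needs only PIT. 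Two spots are asserted rather than proved and deserve a line each if you write this up: (i) irreducibility of $Y\setminus O$, which uses that $I$ is a \emph{prime} ideal together with coherence --- if compact opens $U_1,U_2$ both meet $Y\setminus O$ but $U_1\cap U_2\subseteq O$, then compactness of $U_1\cap U_2$ puts it in $I$, and primality forces $U_1\in I$ or $U_2\in I$, a contradiction; and (ii) the verification $N(y)=F$, which uses that a compact open $U$ meets $\overline{\{y\}}$ iff $y\in U$, and that $U\in F$ iff $U\not\subseteq O$ (again by compactness of $U$). Similarly, your claim that clopen upsets and clopen downsets generate the Priestley topology, and that the $\sigma(U)$ form a basis of $\tau_u$ on $X_D$, are standard compactness-plus-separation facts the paper also takes for granted. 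With those routine insertions your proof is complete and matches the classical one behind the citation.
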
  

Under this isomorphism, closed upsets of a Priestley space $X$ are exactly the compact saturated subsets of the spectral space $(X,\tau_u)$ (see, e.g., \cite[Thm.~6.1]{BezhanishviliGabelaiaKurz2010}), where we recall that saturated subsets of $(X,\tau_u)$ are exactly the upsets of $X$ (see also Section~\ref{sec 3}).

We conclude this section with the following well-known result in Priestley duality, which is reminiscent of the Hofmann-Mislove Theorem, and indeed will play a crucial role in our alternative proof of the theorem. 

Let $D$ be a distributive lattice and $X$ its Priestley space. Let ${\sf Filt}(D)$ be the poset of filters of $D$ ordered by reverse inclusion. Let also ${\sf ClUp}(X)$ be the poset of closed upsets of $X$ ordered by inclusion. We then have (see, e.g., \cite[Cor.~6.3]{BezhanishviliGabelaiaKurz2010}):

\begin{theorem}\label{thm: filters}
${\sf Filt}(D)$ is isomorphic to ${\sf ClUp}(X)$.
\end{theorem}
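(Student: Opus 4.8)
The plan is to write down an explicit pair of mutually inverse, order-preserving maps between ${\sf Filt}(D)$ and ${\sf ClUp}(X)$. Given a filter $F$ of $D$, set
\[
\varphi(F) = \bigcap_{a\in F}\sigma(a) = \{x\in X : F\subseteq x\}.
\]
Since each $\sigma(a)$ is a clopen upset, $\varphi(F)$ is an intersection of clopen upsets, hence a closed upset of $X$. In the other direction, given a closed upset $K$ of $X$, set
\[
\psi(K) = \{a\in D : K\subseteq \sigma(a)\}.
\]
Using that $\sigma$ is a bounded lattice embedding, one checks routinely that $\psi(K)$ is a filter of $D$. Both assignments are plainly inclusion-reversing, hence order-preserving from $({\sf Filt}(D),\supseteq)$ to $({\sf ClUp}(X),\subseteq)$; so the content of the theorem is that $\varphi$ and $\psi$ are mutually inverse.

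For $\psi(\varphi(F)) = F$, the inclusion $F\subseteq\psi(\varphi(F))$ is immediate. For the converse, suppose $a\notin F$. Since $F$ is an upset, $F$ is disjoint from the ideal ${\downarrow}a=\{b\in D : b\le a\}$, so by the Prime Ideal Theorem there is a prime filter $x$ with $F\subseteq x$ and $a\notin x$. Then $x\in\varphi(F)$ but $x\notin\sigma(a)$, which shows $a\notin\psi(\varphi(F))$.

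For $\varphi(\psi(K)) = K$, again $K\subseteq\varphi(\psi(K))$ is immediate from the definitions. Conversely, suppose $x\notin K$. Since $K$ is an upset, $y\not\le x$ for every $y\in K$ (otherwise $x\in{\uparrow}y\subseteq K$), so the Priestley separation axiom provides, for each $y\in K$, a clopen upset $U_y$ with $y\in U_y$ and $x\notin U_y$. The family $\{U_y : y\in K\}$ covers $K$, and $K$, being closed in the compact space $X$, is compact; hence finitely many $U_{y_1},\dots,U_{y_n}$ cover $K$. Their union $U$ is a clopen upset with $K\subseteq U$ and $x\notin U$. Since the clopen upsets of $X$ are exactly the sets $\sigma(a)$ with $a\in D$, we obtain $a\in D$ with $K\subseteq\sigma(a)$ and $x\notin\sigma(a)$; thus $a\in\psi(K)$ while $x\notin\varphi(\psi(K))$. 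Finally, being mutually inverse order-preserving maps, $\varphi$ and $\psi$ witness the claimed isomorphism.

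The verifications that $\varphi(F)$ is a closed upset, that $\psi(K)$ is a filter, and that both maps reverse inclusion are routine; the substance lies in the two identities above. I expect the step requiring the most care to be $\varphi(\psi(K)) = K$, where one must combine the Priestley separation axiom with compactness of the closed upset $K$ — precisely the ingredient that makes this result ``reminiscent of Hofmann--Mislove'' — whereas $\psi(\varphi(F)) = F$ rests only on the (weaker) Prime Ideal Theorem, in line with the paper's aim of avoiding Zorn's lemma.
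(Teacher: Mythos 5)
Your proof is correct and uses exactly the maps the paper itself describes ($F\mapsto K_F=\bigcap\{\sigma(a):a\in F\}$ and $K\mapsto F_K=\{a\in D:K\subseteq\sigma(a)\}$), filling in the standard verification---via the Prime Ideal Theorem for $\psi(\varphi(F))=F$ and Priestley separation plus compactness for $\varphi(\psi(K))=K$---that the paper delegates to the cited reference. So this is essentially the same approach, just with the details the paper omits.
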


The isomorphism is obtained by sending $F\in{\sf Filt}(D)$ to the closed upset 
\[
K_F = \bigcap\{ \sigma(a) : a \in F \}.
\] 
Its inverse sends $K\in{\sf ClUp}(X)$ to the filter 
\[
F_K = \{ a \in D : K \subseteq \sigma(a) \}.
\]

\section{Hofmann-Mislove for spectral spaces}\label{sec 3}

We recall (see, e.g., \cite[p.~10]{PicadoPultr2012}) that a {\em frame} is a complete lattice $L$ satisfying
\[
a\wedge\bigvee S = \bigvee \{ a\wedge s : s \in S \}.
\]
A map $h:L\to M$ between two frames is a {\em frame homomorphism} if $h$ preserves finite meets and arbitrary joins. Let $\sf Frm$ be the category of frames and frame homomorphisms. 
The next definition is well known (see, e.g., \cite{KeimelPaseka1994}).

\begin{definition}
Let $L$ be a frame. A filter $F$ of $L$ is {\em Scott-open} if $\bigvee S \in F$ implies $\bigvee T \in F$ for some finite $T \subseteq S$. 
\end{definition}

Let $X$ be a sober space. We recall that the {\em specialization order} on $X$ is defined by $x\le y$ if $x$ belongs to the closure of $\{y\}$. A subset $S$ of $X$ is {\em saturated} if it is an upset in the specialization order. Let ${\sf KSat}(X)$ be the poset of compact saturated subsets of $X$ ordered by inclusion. Let also ${\sf OFilt}(L)$ be the poset of Scott-open filters of $L$ ordered by reverse inclusion. 

\begin{remark}
It is more customary to order ${\sf OFilt}(L)$ by inclusion and ${\sf KSat}(X)$ by reverse inclusion (see, e.g., \cite[Sec.~II-1]{Compendium2003}). Our ordering is motivated by how we ordered the posets of filters and closed upsets in Section~\ref{sec 2}.
\end{remark}

\begin{theorem} [Hofmann-Mislove \cite{HofmannMislove1981}]
Let $X$ be a sober space and $L$ the frame of open subsets of $X$. Then ${\sf OFilt}(L)$ is isomorphic to ${\sf KSat}(X)$.
\end{theorem}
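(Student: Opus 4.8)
The plan is to realize the Hofmann–Mislove correspondence as a restriction of the Priestley-duality isomorphism ${\sf Filt}(L)\cong{\sf ClUp}(X)$ from Theorem~\ref{thm: filters}, where $X$ is the Priestley space of the bounded distributive lattice underlying the frame $L=\mathcal O(X_0)$ (here $X_0$ denotes the given sober space, and $L$ is viewed merely as a distributive lattice for the purposes of Priestley duality). The key structural observation is that, by Cornish's isomorphism (Theorem~\ref{thm: Cornish}) together with the remark following it, the closed upsets of $X$ are exactly the compact saturated subsets of the spectral space $(X,\tau_u)$, and the points of the frame $L$ — equivalently, the completely prime filters of $L$ — can be identified with the sober space $X_0$ itself. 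So the top-level strategy is: (i) cut down the iso of Theorem~\ref{thm: filters} to an iso between ${\sf OFilt}(L)$ and a distinguished sub-poset of ${\sf ClUp}(X)$; (ii) identify that sub-poset, under Cornish's functor $X\mapsto(X,\tau_u)$, with ${\sf KSat}(X_0)$.

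The heart of the argument is step (i): characterizing, purely order-topologically inside the Priestley space $X$, which closed upsets $K$ correspond to Scott-open filters $F$ under $F\mapsto K_F=\bigcap\{\sigma(a):a\in F\}$. First I would unwind the Scott-openness condition: a filter $F$ of $L$ is Scott-open iff whenever $\bigvee S\in F$ there is a finite $T\subseteq S$ with $\bigvee T\in F$. Translating through $\sigma$ and the fact that $\sigma$ sends finite meets/joins of the lattice $L$ to finite intersections/unions of clopen upsets (but does \emph{not} in general respect the infinite joins of $L$, which is exactly where the content lies), I expect Scott-openness of $F$ to become the statement that $K_F$ is a \emph{compact} subset of $(X,\tau_u)$ in a strong sense — more precisely, that $K_F$ is contained in the $\tau_u$-closure relationship governed by the frame structure. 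Concretely: $\bigvee S$ in $L$ corresponds to the smallest clopen upset containing $\bigcup_{s\in S}\sigma(s)$ — or rather its $\tau_u$-interior closure — and Scott-openness says $K_F\subseteq \sigma(\bigvee S)$ forces $K_F\subseteq\sigma(s_1)\cup\cdots\cup\sigma(s_n)$ for finitely many $s_i\in S$. This should be precisely the condition that makes $K_F$, as an upset of $X$, a compact saturated set in the \emph{finer} topology coming from the frame $L$ of points, i.e. a ``Scott-upset'' in the paper's terminology. I would package this as: $F\in{\sf OFilt}(L)$ iff $K_F$ is a Scott-upset of $X$.

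For step (ii), I would invoke the standard fact (implicit in the spectral-space picture and the identification of points of $\mathcal O(X_0)$ with $X_0$) that the assignment $K\mapsto \{p\in X_0 : p\in K\}$, sending a closed upset of $X$ to the corresponding subset of points, restricts to a poset isomorphism from the Scott-upsets of $X$ onto ${\sf KSat}(X_0)$; compatibility with the order (reverse inclusion on filters, inclusion on upsets/compact saturated sets matches the conventions fixed in Section~\ref{sec 3}) is then a routine check since every map in sight is inclusion-monotone and we are only composing known bijections. Composing the iso of step (i) with this identification yields ${\sf OFilt}(L)\cong{\sf KSat}(X_0)$, which is the Hofmann–Mislove Theorem.

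The main obstacle I anticipate is entirely in step (i): namely pinning down the right intrinsic description of ``Scott-upset'' in the Priestley space and verifying that $F\mapsto K_F$ really does match Scott-openness with that description in \emph{both} directions. The subtlety is that the Stone map $\sigma$ turns the distributive-lattice structure of $L$ into clopen upsets faithfully but collapses the distinction between $\bigvee S$ and finite subjoins unless one tracks the topology carefully; getting compactness to fall out on the nose — and, conversely, recovering Scott-openness of $F_K$ from a compactness/finiteness property of $K$ — is where the Prime Ideal Theorem enters (to guarantee enough prime filters separating points) and where I would expect to spend most of the effort. Everything downstream is bookkeeping with the dualities already recorded in Sections~\ref{sec 2} and~\ref{sec 3}.
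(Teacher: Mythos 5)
Your overall architecture is the same as the paper's: restrict the isomorphism ${\sf Filt}(L)\cong{\sf ClUp}(X)$ of Theorem~\ref{thm: filters} to Scott-open filters, identify the image as a distinguished class of closed upsets, and then pass from these to the compact saturated subsets of the space of points, which sobriety identifies with the given space. But as a proof there are two genuine gaps, and they are precisely the two results the paper has to prove. In your step (i) you never actually produce the intrinsic description of the closed upsets corresponding to Scott-open filters: your candidate condition is only a re-translation of Scott-openness through $\sigma$ (``$K_F\subseteq\sigma(\bigvee S)$ forces a finite subcover''), and you yourself flag pinning it down as the main unresolved obstacle. The paper's answer is Lemma~\ref{lemma:scottopenIFFcoinitial}: $F$ is Scott-open iff $\min K_F\subseteq Y$, i.e.\ every minimal point of $K_F$ is a completely prime filter. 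The nontrivial direction uses structure your outline never touches: $X$ is an extremally order-disconnected Esakia space, so $\sigma\left(\bigvee S\right)={\sf cl}\left(\bigcup_{s\in S}\sigma(s)\right)$ (Lemma~\ref{lem: join}) and the closure of an open upset is again an upset; one applies this to the open upset $(\downset x)^c$ attached to a minimal point $x\notin Y$. Without some such bridge between the infinite joins of $L$ and the topology of its Priestley space, step (i) does not get off the ground.

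Your step (ii) is likewise not a standard fact that can simply be invoked. Cornish's identification (Theorem~\ref{thm: Cornish}) gives ${\sf ClUp}(X)={\sf KSat}(X,\tau_u)$ for the spectral space of \emph{all} prime filters of $L$, whereas the Hofmann--Mislove target is ${\sf KSat}$ of the subspace $Y$ of completely prime filters. Passing from Scott-upsets of $X$ to ${\sf KSat}(Y)$ is exactly Theorem~\ref{thm:coinitialIFFcompactsaturated}, and its proof is not order-theoretic bookkeeping: one must check that $K\cap Y$ is compact in $Y$ (a cover by sets $\zeta(a_i)$ covers $\min K$ because $\min K\subseteq Y$, hence covers $K=\upset\min K$ since the $\sigma(a_i)$ are upsets, and then compactness of $K$ in the Priestley topology yields a finite subcover), that $\upset Q$ is closed in $X$ for $Q$ compact saturated in $Y$ (via the Priestley separation axiom together with compactness of $Q$), and that $\upset(K\cap Y)=K$ (which again needs $\min K\subseteq Y$). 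These verifications carry the compactness content of the theorem; declaring them routine leaves Hofmann--Mislove unproved. In short: the skeleton is right and essentially the paper's, but both load-bearing lemmas are missing.
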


As was pointed out in \cite[Rem.~6.4]{BezhanishviliGabelaiaKurz2010}, if $X$ is a spectral space, then the Hofmann-Mislove Theorem and Theorem~\ref{thm: filters} are simply reformulations of each other. Indeed, let $D$ be a bounded distributive lattice and let $L$ be the frame of ideals of $D$. Then $L$ is a coherent frame \cite[p.~64]{Johnstone1982}, where we recall that a frame $L$ is {\em coherent} if the compact elements form a bounded sublattice of $L$ that join-generates $L$.\footnote{We recall that $a\in L$ is {\em compact} if $a\leq\bigvee S$ implies $a\leq\bigvee T$ for some finite $T\subseteq S$, and $D$ {\em join-generates} $L$ if each element of $L$ is a join of elements from $D$.} In fact, sending $D$ to $L$ defines a covariant functor that establishes an equivalence between $\sf Dist$ and the category $\sf CohFrm$ of coherent frames and coherent morphisms (where a morphism is coherent if it is a frame homomorphism that sends compact elements to compact elements). Under this equivalence, the posets ${\sf Filt}(D)$ and ${\sf OFilt}(L)$ are isomorphic. 

Let $X$ be the Priestley space of $D$. By Theorem~\ref{thm: filters}, ${\sf Filt}(D)$ is isomorphic to ${\sf ClUp}(X)$. As we pointed out after Theorem~\ref{thm: Cornish}, ${\sf ClUp}(X)={\sf KSat}(X,\tau_u)$. Thus, the isomorphism of Theorem~\ref{thm: filters} between ${\sf Filt}(D)$ and ${\sf ClUp}(X)$ amounts to the Hofmann-Mislove isomorphism between ${\sf OFilt}(L)$ and ${\sf KSat}(X,\tau_u)$. Since $(X,\tau_u)$ is a spectral space and each spectral space arises this way (up to homeomorphism), we conclude that the Hofmann-Mislove Theorem for spectral spaces is equivalent to Theorem~\ref{thm: filters}.

In what follows we will show that we can use Priestley duality to prove the Hofmann-Mislove Theorem for an arbitrary sober space, and even more generally for an arbitrary frame. For this we will work with Priestley spaces of frames.

\section{Priestley duality for frames}

Let $L$ be a frame and $X$ its Priestley space. Since frames are nothing more than complete Heyting algebras, we can use Esakia duality \cite{Esakia1974} (see also \cite{Esakia2019}) to describe $X$. Indeed, the dual of a Heyting algebra is a Priestley space that in addition satisfies ${\downarrow}U$ is clopen for each clopen $U$. Such Priestley spaces are called {\em Esakia spaces}. Therefore, if $X$ is the Priestley space of a frame $L$, then $X$ is an Esakia space. In addition, since $L$ is complete, the closure of every open upset of $X$ is open in $X$ (see, e.g., \cite[Thm.~2.4]{BezhanishviliBezhanishvili2008}). Such spaces are called {\em extremally order-disconnected Esakia spaces}. Thus, Priestley spaces of frames are exactly the extremally order-disconnected Esakia spaces. Since frames are also referred to as locales, we adapt the following terminology.

\begin{definition}
    A \emph{localic space} or simply an {\em L-space} is an extremally order-disconnected Esakia space.
\end{definition}

\begin{remark}
In \cite{PultrSichler1988} these spaces are called f-spaces, and in \cite{PultrSichler2000} they are called LP-spaces.
\end{remark}

We recall (see, e.g., \cite[p.~15]{PicadoPultr2012}) that with each frame $L$ we can associate the space of points of $L$, where a {\em point} of a frame $L$ is a {\em completely prime filter} of $L$; that is, a point is a filter $F$ of $L$ such that $\bigvee S\in F$ implies $S\cap F \ne \varnothing$. Since each completely prime filter is prime, we will view the set $Y$ of points of $L$ as a subset of the Priestley space $X$ of $L$. By \cite[Lem.~5.1]{BezhanishviliGabelaiaJibladze2016} (see also \cite[Prop.~2.9]{PultrSichler2000}), we have: 

\begin{lemma} \label{lem: Y}
Let $L$ be a frame and $X$ the Priestley space of $L$. Then 
\[
Y = \{x \in X : \downset x \text{ is clopen}\}.
\]
\end{lemma}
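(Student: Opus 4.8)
The plan is to reduce the statement to a single order-theoretic reformulation of ``completely prime'' and then read off both inclusions directly from the definition of the Priestley topology. Throughout, for a prime filter $x$ of $L$ I write $a_x=\bigvee(L\setminus x)$, which exists since $L$ is complete. The only facts about Priestley duality I would use are that each $\sigma(a)$ is clopen, that the clopen upsets of $X$ are exactly the sets $\sigma(a)$ for $a\in L$, and that $\sigma$ is order-reflecting, i.e.\ $\sigma(b)\subseteq\sigma(a)$ implies $b\le a$ (this last point is where the Prime Ideal Theorem enters).

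First I would establish the preliminary fact that $x\in Y$ if and only if $a_x\notin x$; equivalently, $x$ is completely prime exactly when $L\setminus x$ has a greatest element, necessarily $a_x$. If $x$ is completely prime and $a_x=\bigvee(L\setminus x)\in x$, then $x$ meets $L\setminus x$, which is absurd; this gives one direction. Conversely, if $a_x\notin x$ and $\bigvee S\in x$ with $S\cap x=\varnothing$, then $S\subseteq L\setminus x$ forces $\bigvee S\le a_x$, and since $x$ is an upset we obtain the contradiction $a_x\in x$.

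Next, for the inclusion $Y\subseteq\{x:\downset x\text{ is clopen}\}$, I would take $x\in Y$ and prove $\downset x=\sigma(a_x)^{c}$, which is clopen. If $y\le x$ then $a_x\notin y$, since otherwise $a_x\in y\subseteq x$ contradicts $a_x\notin x$; this gives $\downset x\subseteq\sigma(a_x)^{c}$. Conversely, if $a_x\notin y$ and $b\in y$, then $b\in x$: were $b\notin x$, then $b\le a_x$ would force $a_x\in y$ because $y$ is an upset. Hence $y\subseteq x$, that is, $y\in\downset x$.

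For the reverse inclusion, suppose $\downset x$ is clopen. Being a clopen downset of a Priestley space, it is the complement of a clopen upset, so $\downset x=\sigma(a)^{c}$ for some $a\in L$, i.e.\ $\sigma(a)=\{y\in X:y\not\subseteq x\}$. From $x\in\downset x$ we get $a\notin x$, hence $a\le a_x$. For the reverse inequality I would show $\sigma(b)\subseteq\sigma(a)$ for every $b\in L\setminus x$ --- if $b\in y\subseteq x$ then $b\in x$ --- so that $b\le a$ since $\sigma$ is order-reflecting, and therefore $a_x=\bigvee(L\setminus x)\le a$. Thus $a=a_x\notin x$, and the preliminary fact gives $x\in Y$. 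The one point that calls for a little care is this last inclusion: one must notice that the a priori arbitrary element $a$ naming the clopen downset $\downset x$ is in fact forced to be $a_x$. I would also remark that the argument uses only the completeness of $L$ (to form $a_x$) together with Priestley duality for the underlying bounded distributive lattice, so that neither frame distributivity nor the extremal order-disconnectedness of $X$ is needed for this particular lemma.
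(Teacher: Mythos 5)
Your proof is correct, and it is worth noting that the paper itself does not prove this lemma at all: it simply cites \cite{BezhanishviliGabelaiaJibladze2016} and \cite{PultrSichler2000}, so your argument supplies a self-contained proof of a fact the authors take off the shelf. Your route is pleasantly economical: reducing ``$x$ completely prime'' to ``$a_x=\bigvee(L\setminus x)\notin x$'' and then exhibiting $\downset x=\sigma(a_x)^{c}$ uses only the completeness of $L$, the identification of clopen upsets with the sets $\sigma(a)$, and order-reflection of $\sigma$ (i.e.\ the Prime Ideal Theorem, which the Priestley framework needs anyway). In particular it bypasses the topological machinery that proofs in this circle of ideas typically invoke, such as the closure formula $\sigma\left(\bigvee S\right)={\sf cl}\left(\bigcup_{s\in S}\sigma(s)\right)$ of Lemma~\ref{lem: join} and the extremal order-disconnectedness of the L-space; your closing remark that the lemma holds for any complete bounded distributive lattice, not just frames, is accurate for the same reason. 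Two small observations: in the converse direction you do not actually need to identify $a$ with $a_x$ --- from $\downset x=\sigma(a)^c$ and $b\le a$ for all $b\notin x$ one can verify complete primeness of $x$ directly (if $\bigvee S\in x$ with $S\cap x=\varnothing$ then $\bigvee S\le a$, so $a\in x$, contradicting $x\in\downset x$) --- though your normalization $a=a_x$ is a clean way to package it; and your preliminary equivalence tacitly uses that prime filters are proper, which is the standing convention in Priestley duality, so there is no gap.
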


We thus can define $Y$ in an arbitrary L-space.

\begin{definition}
For an L-space $X$, let $Y = \{x \in X : \downset x \text{ is clopen}\}$. 
\end{definition}

\begin{remark} \label{remark:4.5}
Since in a Priestley space the downset of a closed set is closed (see, e.g., \cite[Prop.~2.6]{Priestley1984}), we have $Y = \{x \in X : \downset x \text{ is open}\}$.
\end{remark}

For a frame $L$, let $\zeta(a)=\{y \in Y : a \in y \}$. It is well known (see, e.g., \cite[p.~15]{PicadoPultr2012}) that $\{ \zeta(a) : a \in L \}$ is a topology on $Y$. In fact, $\zeta(a)=\sigma(a)\cap Y$. Thus, the topology on $Y$ is the restriction of the open upset topology on $X$ (see \cite[Lem.~5.3]{AvilaBezhanishviliMorandiZaldivar2020}). 

We recall that a frame $L$ is {\em spatial} if $a \not\leq b$ implies that there is a completely prime filter $F$ of $L$ such that $a \in F$ and $b \notin F$. Equivalently, $L$ is spatial iff $L$ is isomorphic to the frame of open subsets of $Y$ (see, e.g., \cite[p.~18]{PicadoPultr2012}). By \cite[Thm.~5.5]{AvilaBezhanishviliMorandiZaldivar2020}, we have:

\begin{theorem} 
Let $L$ be a frame, $X$ its Priestley space, and $Y \subseteq X$ the set of points of $L$. Then
$L$ is spatial iff $Y$ is dense in $X$.
\end{theorem}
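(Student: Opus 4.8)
The plan is to unwind both sides of the claimed equivalence into a single statement about separating pairs of elements of $L$ by points, and then to match that statement with the definition of spatiality recalled just before the theorem.

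First I would describe a convenient basis for the Priestley topology on $X$. Since $\sigma$ sends finite meets to intersections and finite joins to unions (the latter using primeness), and since $\sigma(1)=X$ and $\sigma(0)=\varnothing$, every nonempty finite intersection of subbasic sets $\sigma(a_i)$ and $\sigma(b_j)^c$ collapses to a set of the form $\sigma(a)\setminus\sigma(b)$ for suitable $a,b\in L$. Hence $\{\sigma(a)\setminus\sigma(b) : a,b\in L\}$ is a basis for $X$, and consequently $Y$ is dense in $X$ if and only if every nonempty basic open $\sigma(a)\setminus\sigma(b)$ meets $Y$.

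Next I would record two membership criteria. On the one hand, $\sigma(a)\setminus\sigma(b)=\{x\in X : a\in x,\ b\notin x\}$ is nonempty precisely when some prime filter of $L$ contains $a$ and omits $b$; by the prime filter separation theorem for distributive lattices (this is the one place where the Prime Ideal Theorem is needed), this happens exactly when $a\not\le b$, since ${\uparrow}a$ and ${\downarrow}b$ are disjoint iff $a\not\le b$. On the other hand, $(\sigma(a)\setminus\sigma(b))\cap Y=\zeta(a)\setminus\zeta(b)=\{y\in Y : a\in y,\ b\notin y\}$ is nonempty precisely when there is a point $y$ of $L$ with $a\in y$ and $b\notin y$.

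Combining these, $Y$ is dense in $X$ if and only if for all $a,b\in L$ with $a\not\le b$ there is a point $y$ of $L$ with $a\in y$ and $b\notin y$, which is exactly the definition of $L$ being spatial. The argument is little more than bookkeeping; the only steps that require any care are the reduction of arbitrary basic opens to the form $\sigma(a)\setminus\sigma(b)$ and the appeal to prime filter separation to decide when such a set is empty, so I do not anticipate a genuine obstacle.
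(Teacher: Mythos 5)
Your proof is correct: the reduction of nonempty basic opens to the form $\sigma(a)\setminus\sigma(b)$, the use of prime filter separation (the only appeal to the Prime Ideal Theorem) to characterize when such a set is nonempty as $a\not\le b$, and the translation of density of $Y$ into the separation condition defining spatiality all go through without gaps. The paper does not prove this theorem itself but cites \cite[Thm.~5.5]{AvilaBezhanishviliMorandiZaldivar2020}, and your argument is essentially the standard one underlying that reference, so it matches the intended approach.
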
 

We conclude this section with the following well-known fact (see, e.g., \cite[Lem.~2.3]{BezhanishviliBezhanishvili2008}), which will be used in the next section. As usual, we write $\sf cl$ for closure in a topological space.

\begin{lemma} \label{lem: join}
    Let $L$ be a frame and $X$ its Priestley space. For each $S\subseteq L$, we have
    \[
    \sigma\left(\bigvee S\right) = {\sf cl}\left(\bigcup_{s \in S} \sigma(s)\right).
    \]
\end{lemma}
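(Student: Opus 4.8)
The plan is to prove the two inclusions separately. The inclusion ${\sf cl}\left(\bigcup_{s\in S}\sigma(s)\right) \subseteq \sigma\left(\bigvee S\right)$ is immediate: by Priestley duality $\sigma\colon L \to \wp(X)$ is a bounded lattice embedding, hence order preserving, so $s \le \bigvee S$ gives $\sigma(s) \subseteq \sigma\left(\bigvee S\right)$ for every $s\in S$, and therefore $\bigcup_{s\in S}\sigma(s) \subseteq \sigma\left(\bigvee S\right)$. Since $\sigma\left(\bigvee S\right)$ is a clopen upset, it is in particular closed, so passing to closures preserves the inclusion.

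For the reverse inclusion $\sigma\left(\bigvee S\right) \subseteq {\sf cl}\left(\bigcup_{s\in S}\sigma(s)\right)$, I would argue by contraposition. Suppose $x \notin {\sf cl}\left(\bigcup_{s\in S}\sigma(s)\right)$. Then $x$ has a basic open neighborhood $O$ disjoint from $\bigcup_{s\in S}\sigma(s)$. Since the subbasis of the Priestley topology is $\{\sigma(a): a\in L\}\cup\{\sigma(b)^c : b\in L\}$ and $\sigma$ preserves finite meets and finite joins, every basic open has the form $\sigma(a)\cap\sigma(b)^c$ for suitable $a,b\in L$; so we may take $O = \sigma(a)\cap\sigma(b)^c$ with $a\in x$, $b\notin x$, and $\sigma(a)\cap\sigma(b)^c\cap\sigma(s)=\varnothing$ for every $s\in S$. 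The last condition says $\sigma(a\wedge s)\subseteq\sigma(b)$, and since $\sigma$ is an order-embedding this forces $a\wedge s \le b$ for all $s\in S$. Now the frame distributive law gives $a\wedge\bigvee S = \bigvee_{s\in S}(a\wedge s) \le b$. If we had $x\in\sigma\left(\bigvee S\right)$, then $a,\bigvee S \in x$, hence $a\wedge\bigvee S \in x$ because $x$ is a filter, hence $b\in x$ because $x$ is an upset and $a\wedge\bigvee S \le b$ — contradicting $b\notin x$. Therefore $x\notin\sigma\left(\bigvee S\right)$, which is what we wanted.

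The only substantive step is the reverse inclusion, and within it the main point is converting the topological fact that $O$ misses every $\sigma(s)$ into the family of algebraic inequalities $a\wedge s\le b$ and then collapsing them with the frame distributive law; reducing to basic opens of the special shape $\sigma(a)\cap\sigma(b)^c$ is what makes this translation clean. Completeness of $L$ enters only through this distributive law (and to make sense of $\bigvee S$), while the order-reflection of $\sigma$ used above is part of Priestley duality and ultimately rests on the Prime Ideal Theorem.
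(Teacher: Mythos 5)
Your proof is correct. Note that the paper does not prove this lemma at all: it is quoted as a well-known fact with a reference to \cite[Lem.~2.3]{BezhanishviliBezhanishvili2008}, so there is no in-paper argument to compare against; your write-up is a legitimate self-contained substitute. It is worth contrasting your route with the standard one from the cited literature. The usual argument observes that $\bigcup_{s\in S}\sigma(s)$ is an open upset, so its closure is a closed upset; if $x$ lies outside this closure, the Priestley separation axiom together with compactness produces a single clopen upset $\sigma(c)$ containing $\bigcup_{s\in S}\sigma(s)$ and missing $x$, whence $s\le c$ for all $s$, so $\bigvee S\le c$ and $x\notin\sigma(\bigvee S)$. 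That argument needs only the universal property of the join (it works in any bounded distributive lattice in which $\bigvee S$ happens to exist), whereas your argument invokes the frame distributive law $a\wedge\bigvee S=\bigvee_s(a\wedge s)$ — harmless here since $L$ is a frame, but slightly less general. What your version buys in exchange is that it avoids the separation-plus-compactness step and the fact that closures of upsets are upsets: you only use the shape of basic opens $\sigma(a)\cap\sigma(b)^c$ and order-reflection of $\sigma$. Both arguments rest only on the Prime Ideal Theorem, so your proof is compatible with the paper's stated foundational economy. Your verification that every basic open has the form $\sigma(a)\cap\sigma(b)^c$ (using that $\sigma$ preserves finite meets and finite joins, with $a=1$ or $b=0$ for empty families) is the only point a reader might want spelled out, and it is fine as stated.
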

\section{Hofmann-Mislove in full generality} \label{sec 5}

The key for obtaining a new proof of the Hofmann-Mislove Theorem using Priestley duality is the characterization of Scott-open filters of a frame $L$ as special closed upsets of the Priestley space $X$ of $L$. 

Let $K$ be a closed upset of $X$. Since $K$ is closed, it is well known (see, e.g., \cite[Thm.~3.2.1]{Esakia2019}) that for each $x\in K$ there is a minimal point $m$ of $K$ such that $m\leq x$. Thus, if $\min K$ is the set of minimal points of $K$, then $K={\uparrow}\min K$.

Let $F$ be a filter of $L$. We recall from Theorem~\ref{thm: filters} that the corresponding closed upset is $K_F=\bigcap\{\sigma(a) : a \in F\}$. We will freely use the well-known fact that in a Priestley space, the downset of a closed set is closed (see Remark~\ref{remark:4.5}). 

\begin{lemma} \label{lemma:scottopenIFFcoinitial}
    Let $L$ be a frame, $X$ its Priestley space, and $Y \subseteq X$ the set of points of $L$. For a filter $F$ of $L$, the following are equivalent.
    \begin{enumerate}[ref=\thetheorem(\arabic*)]
        \item $F$ is Scott-open.
        \item $\min K_F \subseteq Y$.
        \item For each open upset $U$ of $X$, from $K_F \subseteq {\sf cl}(U)$ it follows that $K_F \subseteq U$. \label{pultr-sichler-condition}
    \end{enumerate}
\end{lemma}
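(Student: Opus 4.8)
The plan is to prove the cycle $(1)\Rightarrow(2)\Rightarrow(3)\Rightarrow(1)$, translating the Scott-openness condition back and forth between the frame $L$ and the Priestley space $X$ via the Stone map $\sigma$ and the characterization of $Y$ in Lemma~\ref{lem: Y}. The key dictionary entries are: $K_F = \bigcap\{\sigma(a):a\in F\}$ (Theorem~\ref{thm: filters}); $\sigma(\bigvee S) = {\sf cl}(\bigcup_{s\in S}\sigma(s))$ (Lemma~\ref{lem: join}); and $Y = \{x\in X : {\downarrow}x \text{ is clopen}\} = \{x\in X : {\downarrow}x \text{ is open}\}$ (Lemma~\ref{lem: Y} and Remark~\ref{remark:4.5}). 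I also record the elementary fact that $\bigvee S\in F$ iff $K_F\subseteq\sigma(\bigvee S)$, and $\bigvee T\in F$ for some finite $T\subseteq S$ iff $K_F\subseteq\bigcup_{s\in T}\sigma(s)$ for some finite $T\subseteq S$ (using that $\sigma$ preserves finite joins and that $K_F\subseteq\sigma(a)$ characterizes membership $a\in F$).

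For $(1)\Rightarrow(2)$: take $m\in\min K_F$; I want to show ${\downarrow}m$ is open, equivalently (Remark~\ref{remark:4.5}) clopen, so that $m\in Y$. The set ${\uparrow}m$ is the intersection of all clopen upsets containing $m$; since $m$ is minimal in $K_F={\uparrow}\min K_F$, one checks $K_F\setminus{\uparrow}m$ is again a closed upset, in fact $K_F\setminus{\uparrow}m = K_F\cap({\downarrow}m)^c$ — wait, more carefully, $K_F\subseteq{\uparrow}m\cup(\text{something})$. The cleaner route: $K_F\subseteq\bigcup\{U^c : U \text{ clopen upset},\ m\notin U\}\cup{\uparrow}m$, but since $K_F$ is an upset with minimal point $m$ we in fact have $K_F\setminus{\uparrow}m\subseteq\bigcup\{U^c : U\text{ clopen upset},\ m\notin U\}$. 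Translating through $\sigma$: write each relevant clopen upset as $\sigma(a)$, so $m\notin\sigma(a)$ means $a\notin m$; then the open upset ${\uparrow}m$ equals $\bigcap_{a\in m}\sigma(a)$, and one obtains that $\bigvee\{a : a\notin m\}$ lies in $F$ (because $K_F$ is covered appropriately), whence by Scott-openness a finite subjoin lies in $F$, i.e. $K_F\subseteq\sigma(a_1)^c\cup\cdots$—this forces $m\leq$ no, it forces the complement of ${\uparrow}m$ relative to $K_F$ to be clopen, and then ${\downarrow}m$ open. I expect this to be the \emph{main obstacle}: getting the quantifier bookkeeping exactly right so that Scott-openness of $F$ yields a \emph{single} clopen witness separating $m$ from ${\uparrow}m$ inside $K_F$, hence clopenness of ${\downarrow}m$.

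For $(2)\Rightarrow(3)$: suppose $\min K_F\subseteq Y$ and $K_F\subseteq{\sf cl}(U)$ for an open upset $U$. Each $m\in\min K_F$ has ${\downarrow}m$ clopen (open), and $m\in{\sf cl}(U)$ with ${\downarrow}m$ an open neighborhood of... no: $m\in{\sf cl}(U)$ means every open neighborhood of $m$ meets $U$; since $U$ is an \emph{upset} and ${\downarrow}m$ is open, $U\cap{\downarrow}m\neq\varnothing$ gives some $u\leq m$ with $u\in U$, hence $m\in{\uparrow}u\subseteq U$. So $\min K_F\subseteq U$, and since $U$ is an upset and $K_F={\uparrow}\min K_F$, we get $K_F\subseteq U$. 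For $(3)\Rightarrow(1)$: suppose $\bigvee S\in F$, so $K_F\subseteq\sigma(\bigvee S)={\sf cl}(\bigcup_{s\in S}\sigma(s))$ by Lemma~\ref{lem: join}, and $U:=\bigcup_{s\in S}\sigma(s)$ is an open upset; by (3), $K_F\subseteq U=\bigcup_{s\in S}\sigma(s)$. Now $K_F$ is a closed subset of the compact space $X$, hence compact, so finitely many $\sigma(s_1),\dots,\sigma(s_n)$ already cover $K_F$; since $\sigma$ preserves finite joins, $\sigma(s_1\vee\cdots\vee s_n)\supseteq\sigma(s_1)\cup\cdots\cup\sigma(s_n)\supseteq K_F$, which means $s_1\vee\cdots\vee s_n\in F$ with $\{s_1,\dots,s_n\}\subseteq S$ finite. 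Thus $F$ is Scott-open, closing the cycle. The implications $(2)\Rightarrow(3)$ and $(3)\Rightarrow(1)$ are short once the dictionary is set up; the bulk of the writing, and the only genuinely delicate point, is $(1)\Rightarrow(2)$.
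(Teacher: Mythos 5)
Your implications (2)$\Rightarrow$(3) and (3)$\Rightarrow$(1) are correct and essentially the paper's own argument (your (2)$\Rightarrow$(3) is even a touch more direct: you meet ${\sf cl}(U)$ with the open set $\downset m$ instead of first writing $U$ as a union of clopen upsets), and the dictionary fact $a\in F\Leftrightarrow K_F\subseteq\sigma(a)$ is legitimate via Theorem~\ref{thm: filters}. The problem is (1)$\Rightarrow$(2), which is the heart of the lemma and which you have not proved: your sketch ends by admitting the ``quantifier bookkeeping'' is unresolved, and the intermediate claims you do commit to are faulty. In general $K_F\setminus\upset m$ is neither closed nor an upset; $K_F\cap(\downset m)^c$ equals $K_F\setminus\{m\}$ (by minimality of $m$), not $K_F\setminus\upset m$; $\upset m$ is a \emph{closed} upset, not an open one; and the covering family you write down (complements $U^c$ of clopen upsets $U$ with $m\notin U$) is the wrong one --- it is trivially all of $X$ (take $U=\varnothing$) and carries no information; the complements of clopen upsets \emph{containing} $m$ are what cover $(\upset m)^c$. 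Most importantly, your stated target --- a \emph{single} clopen witness separating $m$ from $\upset m$, from which clopenness of $\downset m$ would follow --- is not what Scott-openness delivers, and chasing it is why the argument stalls.

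The argument that works is contrapositive and hinges on the Esakia structure of $X$, which your sketch never invokes. Suppose $m\in\min K_F$ and $\downset m$ is not open; put $U_m=(\downset m)^c$, an open upset that is not closed, and $S=\{s\in L:\sigma(s)\subseteq U_m\}$, so $U_m=\bigcup_{s\in S}\sigma(s)$. Since $X$ is an Esakia space, ${\sf cl}(U_m)$ is an upset; as it meets $\downset m$ and $m=\max\downset m$, we get $m\in{\sf cl}(U_m)=\sigma\left(\bigvee S\right)$ by Lemma~\ref{lem: join}. Because $(\min K_F)\cap\downset m=\{m\}$, every minimal point of $K_F$ lies in $U_m\cup\{m\}\subseteq\sigma\left(\bigvee S\right)$, so $K_F=\upset\min K_F\subseteq\sigma\left(\bigvee S\right)$, i.e.\ $\bigvee S\in F$. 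Yet for every finite $T\subseteq S$ we have $\sigma\left(\bigvee T\right)=\bigcup_{s\in T}\sigma(s)\subseteq U_m$, which misses $m\in K_F$, so $\bigvee T\notin F$; hence $F$ is not Scott-open. The step placing $m$ itself inside $\sigma\left(\bigvee S\right)$ --- closure of an upset is an upset in an Esakia space --- is exactly the missing idea; without it (or some substitute use of the Heyting/complete structure of $L$) the implication does not go through, so as written your proposal has a genuine gap in its main direction.
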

\begin{proof}
    (1)$\Rightarrow$(2)
    Suppose there is $x \in \min K_F \setminus Y$.
    Then $\downset x$ is not open (see Lemma~\ref{lem: Y}), so $U_x := (\downset x)^c$ is not closed. Therefore, ${\sf cl}(U_x)\cap\downset x\ne\varnothing$. But since $X$ is an Esakia space and $U_x$ is an upset, ${\sf cl}(U_x)$ is an upset (see \cite[Thm.~3.1.2(VI)]{Esakia2019}). Thus, since $x$ is the maximum of $\downset x$, we have $x \in {\sf cl}(U_x)$.  
    Let $S = \{s \in L : \sigma(s) \subseteq U_x\}$. Because $\downset x$ is a closed downset, $U_x$ is an open upset. Therefore, $U_x = \bigcup_{s \in S}\sigma(s)$ (see Section~\ref{sec 2}).
    Thus, 
    \[
    x \in {\sf cl}(U_x) = {\sf cl}\left(\bigcup_{s \in S}\sigma(s)\right) = \sigma(\bigvee S)
    \] 
    by Lemma~\ref{lem: join}. Since $(\min K_F) \cap \downset x = \{x\}$ and $x \in \sigma(\bigvee S)$, we have $K_F = \upset \min K_F \subseteq \sigma(\bigvee S)$. Therefore, $\bigvee S \in F$. On the other hand, for each $s\in S$ we have $x\notin\sigma(s)$. Since for each finite $T\subseteq S$, we have $\sigma(\bigvee T)=\bigcup_{s\in T}\sigma(s)$, we conclude that $x \notin \sigma(\bigvee T)$. Thus, $K_F \not\subseteq \sigma(\bigvee T)$, and hence $\bigvee T \notin F$ for each finite $T\subseteq S$. 
    Consequently, $F$ is not Scott-open.

    (2)$\Rightarrow$(3) Suppose $U$ is an open upset of $X$ such that $K_F \subseteq {\sf cl}(U)$. 
    Since $U$ is an open upset, it is a union of clopen upsets, so $U = \bigcup_{s\in S} \sigma(s)$ for some $S \subseteq L$. 
    Therefore, $K_F \subseteq {\sf cl}\left(\bigcup_{s\in S} \sigma(s)\right)$. Let $y \in \min K_F$. Then $y \in Y$ by (2), so $\downset y$ is open. Thus, $\downset y \cap \bigcup_{s\in S} \sigma(s) \neq \varnothing$. This means that for each $y \in \min K_F$ there is some $s \in S$ such that $y \in \sigma(s)$. 
    Consequently, 
    \[
    K_F = \upset \min K_F \subseteq \bigcup_{s\in S} \sigma(s) = U.
    \]
    
    (3)$\Rightarrow$(1) Suppose $\bigvee S \in F$ for some $S \subseteq L$. Then 
    \[
    K_F \subseteq \sigma(\bigvee S) = {\sf cl}\left({\bigcup_{s \in S} \sigma(s)}\right).
    \] 
    Therefore, $K_F \subseteq \bigcup_{s\in S} \sigma(s)$ by (3). Since $K_F$ is closed, it is compact, so there is a finite $T\subseteq S$ such that $K_F\subseteq \bigcup_{s\in T} \sigma(s)=\sigma(\bigvee T)$. Thus, $\bigvee T\in F$, and hence $F$ is Scott-open.
\end{proof}

\begin{definition}
    We call a closed upset $K$ of an L-space $X$ a \emph{Scott-upset} or {\em S-upset} for short if $\min K \subseteq Y$.
\end{definition}

\begin{remark}
In \cite{PultrSichler2000} closed sets satisfying Condition (3) of Lemma~\ref{lemma:scottopenIFFcoinitial} are called \emph{L-compact sets}. Thus, Scott-upsets are exactly the upsets of L-compact sets.
\end{remark}

\begin{corollary} \label{cor:compactness}
Let $L$ be a frame and $X$ its Priestley space.
    \begin{enumerate}[ref=\thetheorem(\arabic*)]
        \item $a \in L$ is compact iff $\sigma(a)$ is an S-upset.
        \item $L$ is compact iff $\min X \subseteq Y$. \label{cor:compactness-item}
    \end{enumerate}
\end{corollary}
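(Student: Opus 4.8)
The plan is to deduce both statements from Lemma~\ref{lemma:scottopenIFFcoinitial} by applying it to principal filters. For part~(1), fix $a\in L$ and consider the principal filter $\upset a=\{b\in L:a\le b\}$. The first step is to identify the associated closed upset: since $\sigma$ is order-preserving, $\sigma(a)\subseteq\sigma(b)$ whenever $a\le b$, so $\sigma(a)\subseteq K_{\upset a}=\bigcap\{\sigma(b):a\le b\}$, while taking $b=a$ in the intersection gives the reverse inclusion; hence $K_{\upset a}=\sigma(a)$. The second step is to note that, unwinding the definition of a Scott-open filter, $\upset a$ is Scott-open exactly when $a\le\bigvee S$ implies $a\le\bigvee T$ for some finite $T\subseteq S$ --- that is, exactly when $a$ is compact. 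Combining these observations with the equivalence (1)$\Leftrightarrow$(2) of Lemma~\ref{lemma:scottopenIFFcoinitial}, we obtain that $a$ is compact iff $\min K_{\upset a}=\min\sigma(a)\subseteq Y$. Since $\sigma(a)$ is a clopen (hence closed) upset, the latter condition is precisely the assertion that $\sigma(a)$ is an S-upset.

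For part~(2), recall that a frame $L$ is compact iff its top element $1$ is a compact element. Since $\sigma(1)=X$, applying part~(1) with $a=1$ yields: $L$ is compact iff $X$ is an S-upset iff $\min X\subseteq Y$.

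I do not expect a genuine obstacle here. The only two points that require a moment of care are the identification $K_{\upset a}=\sigma(a)$ and the observation that Scott-openness of the principal filter $\upset a$ is literally the statement that $a$ is compact; once these are in place, both parts fall out immediately from Lemma~\ref{lemma:scottopenIFFcoinitial}.
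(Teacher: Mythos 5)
Your proposal is correct and follows the same route as the paper: the paper's proof also reduces (1) to the observation that $a$ is compact iff $\upset a$ is Scott-open and then applies Lemma~\ref{lemma:scottopenIFFcoinitial}, with the identification $K_{\upset a}=\sigma(a)$ left implicit, and it derives (2) from (1) via $\sigma(1)=X$ exactly as you do. Your write-up simply makes the two small verifications explicit.
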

\begin{proof}
    (1) Observe that $a$ is compact iff $\upset a$ is Scott-open. Now apply Lemma~\ref{lemma:scottopenIFFcoinitial}.
    
    (2) Since $\sigma(1)=X$, by (1) we have
    \begin{center}
    $L$  is compact $\Longleftrightarrow$ $1$ is compact $\Longleftrightarrow$ $\sigma(1)$  is an S-upset $\Longleftrightarrow$ $\min X \subseteq Y$. \qedhere
     \end{center}
\end{proof}

\begin{remark}
While Corollary~\ref{cor:compactness-item} is already known (see \cite[Lem.~3.1]{BezhanishviliGabelaiaJibladze2016}), our proof is particularly short. 
\end{remark}

For an L-space $X$, let ${\sf SUp}(X)$ be the subposet of ${\sf ClUp}(X)$ consisting of S-upsets of $X$. 

\begin{theorem} \label{thm:coinitialisscott}
    Let $L$ be a frame and $X$ its Priestley space. Then ${\sf OFilt}(L)$ is isomorphic to ${\sf SUp}(X)$.
\end{theorem}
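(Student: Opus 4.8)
The plan is to obtain the desired isomorphism by restricting the one supplied by Theorem~\ref{thm: filters}. Since every frame is in particular a bounded distributive lattice, Theorem~\ref{thm: filters} applied with $D=L$ already gives mutually inverse order isomorphisms $F\mapsto K_F$ and $K\mapsto F_K$ between ${\sf Filt}(L)$ and ${\sf ClUp}(X)$. It therefore suffices to check that this bijection carries the subposet ${\sf OFilt}(L)$ of ${\sf Filt}(L)$ exactly onto the subposet ${\sf SUp}(X)$ of ${\sf ClUp}(X)$; order-preservation in both directions is then inherited for free, since ${\sf OFilt}(L)$ and ${\sf SUp}(X)$ carry the restricted orders.

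For the two containments I would invoke the equivalence (1)$\Leftrightarrow$(2) of Lemma~\ref{lemma:scottopenIFFcoinitial}: a filter $F$ of $L$ is Scott-open iff $\min K_F\subseteq Y$, i.e.\ iff $K_F$ is an S-upset by definition. This immediately shows that $F\mapsto K_F$ sends ${\sf OFilt}(L)$ into ${\sf SUp}(X)$. Conversely, if $K\in{\sf SUp}(X)$, then since the two maps are mutually inverse we have $K_{F_K}=K$, so $K_{F_K}$ is an S-upset, and hence $F_K$ is Scott-open by Lemma~\ref{lemma:scottopenIFFcoinitial} again. Thus $K\mapsto F_K$ sends ${\sf SUp}(X)$ into ${\sf OFilt}(L)$, and the two restricted maps remain mutually inverse, giving ${\sf OFilt}(L)\cong{\sf SUp}(X)$.

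There is essentially no remaining obstacle: all the genuine content has been front-loaded into Lemma~\ref{lemma:scottopenIFFcoinitial} (and, for Theorem~\ref{thm: filters}, into standard Priestley duality), so the argument above is pure bookkeeping. The only point requiring a moment of care is the convention on orders — recalling that ${\sf Filt}(L)$ is taken with reverse inclusion while ${\sf ClUp}(X)$ is taken with inclusion, so that $F\supseteq G$ corresponds to $K_F\subseteq K_G$ — but this compatibility is already built into the statement of Theorem~\ref{thm: filters} and needs no further verification. One may add, as a closing remark, that the Hofmann--Mislove Theorem is now immediate: specializing to a spatial frame $L=\mathcal O(Y)$ and combining this isomorphism with the identification of closed upsets of $X$ with compact saturated sets recovers the classical statement.
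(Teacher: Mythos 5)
Your proposal is correct and follows exactly the paper's argument: restrict the isomorphism ${\sf Filt}(L)\cong{\sf ClUp}(X)$ of Theorem~\ref{thm: filters} using the equivalence (1)$\Leftrightarrow$(2) of Lemma~\ref{lemma:scottopenIFFcoinitial}. The extra bookkeeping you spell out (mutual inverses, order conventions) is fine but already implicit in the paper's two-line proof.
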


\begin{proof}
    By Theorem~\ref{thm: filters}, ${\sf Filt}(L)$ is isomorphic to ${\sf ClUp}(X)$. By Lemma~\ref{lemma:scottopenIFFcoinitial}, this isomorphism restricts to an isomorphism between ${\sf OFilt}(L)$ and ${\sf SUp}(X)$.
\end{proof}

\begin{theorem} \label{thm:coinitialIFFcompactsaturated}
    Let $X$ be an L-space. Then ${\sf SUp}(X)$ is isomorphic to ${\sf KSat}(Y)$.
\end{theorem}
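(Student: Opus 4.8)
The plan is to exhibit a mutually inverse pair of order isomorphisms $\Phi\colon{\sf SUp}(X)\to{\sf KSat}(Y)$ and $\Psi\colon{\sf KSat}(Y)\to{\sf SUp}(X)$, defined by $\Phi(K)=K\cap Y$ and $\Psi(Q)=\upset{\sf cl}(Q)$, where ${\sf cl}$ denotes closure in the Stone topology of $X$. I will use repeatedly the following facts: that the topology on $Y$ is the restriction of $\tau_u$, so its open subsets are exactly the sets $\zeta(a)=\sigma(a)\cap Y$ with $a\in L$; that the specialization order of $Y$ is the restriction of $\le$, so the saturated subsets of $Y$ are exactly the upsets of $(Y,\le)$; that $K=\upset\min K$ for every closed upset $K$ of $X$, with $\min K\subseteq Y$ precisely when $K$ is an S-upset; and that the clopen upsets of $X$ are exactly the sets $\sigma(a)$.

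\emph{Well-definedness of $\Phi$.} If $K$ is an S-upset, then $K\cap Y$ is an upset of $Y$, hence saturated. For compactness, suppose $K\cap Y\subseteq\bigcup_{i\in I}\zeta(a_i)$. Each $m\in\min K$ lies in $K\cap Y$, hence in some $\zeta(a_i)\subseteq\sigma(a_i)$; since $\sigma(a_i)$ is an upset and $K=\upset\min K$, it follows that $K\subseteq\bigcup_{i\in I}\sigma(a_i)$. As $K$ is a closed upset it is compact in $\tau_u$ and the $\sigma(a_i)$ are $\tau_u$-open, so some finite $I_0\subseteq I$ already covers $K$; intersecting with $Y$ yields a finite subcover of $K\cap Y$. \emph{Well-definedness of $\Psi$.} For $Q\in{\sf KSat}(Y)$ put $F_Q=\bigcap\{y:y\in Q\}$, the intersection of the points of $Q$ viewed as filters of $L$. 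Then $F_Q$ is a filter, and it is Scott-open: if $\bigvee S\in F_Q$ then, since each $y\in Q$ is completely prime, $Q\subseteq\bigcup_{s\in S}\zeta(s)$, so compactness of $Q$ gives a finite $T\subseteq S$ with $Q\subseteq\bigcup_{s\in T}\zeta(s)$, whence $\bigvee T\in y$ for all $y\in Q$, i.e. $\bigvee T\in F_Q$. By Lemma~\ref{lemma:scottopenIFFcoinitial}, $K_{F_Q}$ is an S-upset. Moreover, since $a\in F_Q$ iff $Q\subseteq\sigma(a)$, the set $K_{F_Q}=\bigcap\{\sigma(a):Q\subseteq\sigma(a)\}$ is the intersection of all clopen upsets of $X$ containing $Q$, which equals $\upset{\sf cl}(Q)$ by a standard property of Priestley spaces. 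Hence $\Psi(Q)=K_{F_Q}\in{\sf SUp}(X)$.

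\emph{The maps are mutually inverse.} On one side, $\Phi(\Psi(Q))=K_{F_Q}\cap Y=\bigcap\{\zeta(a):Q\subseteq\zeta(a)\}$, which is the intersection of all open subsets of $Y$ containing $Q$, hence equals $Q$ because $Q$ is saturated. On the other side, let $K$ be an S-upset and write $K=K_F$ with $F=\{a:K\subseteq\sigma(a)\}$, as in Theorem~\ref{thm: filters}. Then $F_{K\cap Y}=\{a:K\cap Y\subseteq\sigma(a)\}$ clearly contains $F$; and if $K\cap Y\subseteq\sigma(a)$ then $\min K\subseteq K\cap Y\subseteq\sigma(a)$, so $K=\upset\min K\subseteq\sigma(a)$ since $\sigma(a)$ is an upset, i.e. $a\in F$. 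Thus $F_{K\cap Y}=F$ and $\Psi(\Phi(K))=K_{F_{K\cap Y}}=K_F=K$. Both $\Phi$ and $\Psi$ visibly preserve inclusion, so they are order isomorphisms.

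The only genuinely delicate point I anticipate is the well-definedness of $\Psi$: the verification that $F_Q$ is a Scott-open filter is exactly where complete primeness of the points of $Y$ (rather than mere primeness) enters, and the identification of $K_{F_Q}$ with $\upset{\sf cl}(Q)$ relies on the Priestley fact that the intersection of the clopen upsets containing a subset is the upset of its closure. Everything else is bookkeeping made routine by the equality $K=\upset\min K$ for closed upsets, together with Theorem~\ref{thm: filters} and Lemma~\ref{lemma:scottopenIFFcoinitial}.
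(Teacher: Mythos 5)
Your proof is correct. The forward map and its well-definedness are exactly as in the paper: cover $\min K\subseteq Y$ by the $\sigma(a_i)$, use $K=\upset\min K$ and compactness of $K$ in $X$. The genuine difference is the inverse. The paper takes $g(Q)=\upset Q$ and shows directly, via the Priestley separation axiom and compactness of $Q$ in $Y$, that $\upset Q$ is an intersection of clopen upsets, hence a closed upset (and an S-upset since its minimal points lie in $Q\subseteq Y$); the inverse identities are then the one-line computations $\upset(K\cap Y)=\upset\min K=K$ and $\upset Q\cap Y=Q$. You instead pass through the algebraic side: you form the filter $F_Q=\bigcap\{y: y\in Q\}$, prove it Scott-open from complete primeness of the points of $Q$ together with compactness of $Q$, and then obtain the S-upset property of $K_{F_Q}=\upset{\sf cl}(Q)$ for free from Lemma~\ref{lemma:scottopenIFFcoinitial}, handling the inverse identities through Theorem~\ref{thm: filters}. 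In effect you construct the classical Hofmann--Mislove assignment $Q\mapsto F_Q$ and transport it along ${\sf Filt}(L)\cong{\sf ClUp}(X)$, whereas the paper stays purely on the order-topological side; compactness of $Q$ is used exactly once in each argument, but in different places (closedness of $\upset Q$ versus Scott-openness of $F_Q$). The mild cost of your route is that it quotes two standard facts without proof---that the intersection of all clopen upsets containing a set is the upset of its closure (which is precisely the separation-plus-compactness argument the paper runs in-line), and that a saturated set equals the intersection of its open neighborhoods---and that for $Q=\varnothing$ your $F_Q$ is the improper filter, which is harmless (it is Scott-open and $K_{F_Q}=\varnothing$) but deserves a word, since Theorem~\ref{thm: filters} must admit it to reach the empty closed upset. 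What your version buys is an explicit link to the usual filter-theoretic Hofmann--Mislove map and an automatic verification of the S-upset condition via Lemma~\ref{lemma:scottopenIFFcoinitial}; what the paper's buys is self-containedness and a shorter check that the two maps are mutually inverse.
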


\begin{proof}
Define $f:{\sf SUp}(X) \to {\sf KSat}(Y)$ by $f(K)=K \cap Y$. We show that $f$ is well defined. 
By \cite[Lem.~5.3]{AvilaBezhanishviliMorandiZaldivar2020}, 
the specialization order on $Y$ is the restriction of the partial order on $X$ to $Y$. Therefore, since $K$ is an upset in $X$, we have that $K\cap Y$ is saturated in $Y$. To see that it is compact, let 
    $K \cap Y \subseteq \bigcup \zeta(a_i)$. We have
    \[
     \bigcup \zeta(a_i) = \bigcup (Y \cap \sigma(a_i)) = Y \cap \bigcup \sigma(a_i).
     \]
     Therefore, $K \cap Y \subseteq \bigcup \sigma(a_i)$. Since $K$ is an S-upset, $\min K\subseteq Y$, so $\min K\subseteq \bigcup \sigma(a_i)$, and hence $K=\upset\min K\subseteq \bigcup \sigma(a_i)$. Thus, because $K$ is compact in $X$, there are $a_{i_1},\dots,a_{i_n}$ such that $K\subseteq \sigma(a_{i_1})\cup\cdots\cup\sigma(a_{i_n})$. Consequently, $K\cap Y\subseteq\zeta(a_{i_1})\cup\cdots\cup\zeta(a_{i_n})$, implying that $K\cap Y$ is compact in $Y$. This yields that $f$ is well defined, and it clearly preserves $\subseteq$. 

Next define $g:{\sf KSat}(Y) \to {\sf SUp}(X)$ by $g(Q)=\upset Q$. 
    To see that $\upset Q$ is a closed upset, let $x \not \in \upset Q$. Then $y \not \leq x$ for all $y \in Q$.
    By the Priestley separation axiom, for each $y\in Q$ there is a clopen upset $U_y$ of $X$ such that $y \in U_y$ and $x \not \in U_y$.
    Therefore, $Q \subseteq \bigcup_{y \in Q} U_y$. Since $U_y\cap Y$ is open in $Y$, $Q$ is compact in $Y$, and a finite union of clopen upsets of $X$ is a clopen upset of $X$, we can conclude that there is a clopen upset $U$ of $X$ such that $Q \subseteq U$ and $x \not \in U$. Since $U$ is an upset of $X$, we also have $\upset Q \subseteq U$. Thus, $\upset Q$ is the intersection of clopen upsets of $X$ containing $\upset Q$, and hence  $\upset Q$ is a closed upset. Consequently, $g$ is well defined, and it clearly preserves $\subseteq$. 

    Finally, if $K$ is an S-upset of $X$, then $gf(K)=\upset (K \cap Y) = \upset \min K = K$; and if $Q$ is compact saturated in $Y$, then $fg(Q) = \upset Q \cap Y = Q$. Thus, $f$ and $g$ are order-preserving maps that are inverses of each other, and hence ${\sf SUp}(X)$ is isomorphic to ${\sf KSat}(Y)$.
\end{proof}

The Hofmann-Mislove Theorem is now an immediate consequence of Theorems~\ref{thm:coinitialisscott} and~\ref{thm:coinitialIFFcompactsaturated}. But we have proven the stronger result that the Scott-open filters of an arbitrary frame $L$ are isomorphic to the compact saturated sets of the space of points of $L$. The Hofmann-Mislove Theorem in this generality is established in \cite[Thm.~8.2.5]{Vickers1989}. The proof uses Zorn's Lemma, while our proof only relies on the Prime Ideal Theorem, which is a weaker principle.

\begin{corollary}[Hofmann-Mislove]$  $
    \begin{enumerate}
        \item Let $L$ be a frame and $Y$ the space of points of $L$. Then ${\sf OFilt}(L)$ is isomorphic to ${\sf KSat}(Y)$.
        \item If $Y$ is a sober space and $L$ is the frame of open subsets of $Y$, then ${\sf OFilt}(L)$ is isomorphic to ${\sf KSat}(Y)$. 
    \end{enumerate}
\end{corollary}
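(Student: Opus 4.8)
The plan is to read the corollary off Theorems~\ref{thm:coinitialisscott} and~\ref{thm:coinitialIFFcompactsaturated}; no new argument is needed, so the ``proof'' is just an assembly of the pieces already in hand.

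For part~(1), let $X$ be the Priestley space of $L$ and $Y\subseteq X$ its set of points. Theorem~\ref{thm:coinitialisscott} supplies an isomorphism ${\sf OFilt}(L)\cong{\sf SUp}(X)$, concretely $F\mapsto K_F=\bigcap\{\sigma(a):a\in F\}$, and Theorem~\ref{thm:coinitialIFFcompactsaturated} supplies an isomorphism ${\sf SUp}(X)\cong{\sf KSat}(Y)$, concretely $K\mapsto K\cap Y$. Composing the two gives ${\sf OFilt}(L)\cong{\sf KSat}(Y)$; tracing an element through, $F$ is sent to $K_F\cap Y=\bigcap\{\sigma(a)\cap Y:a\in F\}=\bigcap\{\zeta(a):a\in F\}$, with inverse $Q\mapsto\{a\in L:Q\subseteq\zeta(a)\}$. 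Both composites preserve $\subseteq$ since each factor does, so this is an order isomorphism.

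For part~(2), suppose $Y$ is sober and $L=\mathcal O(Y)$. Then $L$ is spatial, and the canonical map $\eta\colon Y\to\mathrm{pt}(L)$ sending a point $y$ to its completely prime filter $\{U\in L:y\in U\}$ of open neighborhoods is a homeomorphism onto the space of points of $L$ (a bijective $\eta$ is automatically a homeomorphism, and bijectivity of $\eta$ is exactly sobriety of $Y$; equivalently, spatiality gives $L\cong\mathcal O(\mathrm{pt}(L))$ and $\eta$ realizes this on the level of spaces). Hence $\eta$ induces an isomorphism ${\sf KSat}(Y)\cong{\sf KSat}(\mathrm{pt}(L))$, and composing with part~(1) applied to $L$ yields ${\sf OFilt}(L)\cong{\sf KSat}(\mathrm{pt}(L))\cong{\sf KSat}(Y)$.

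I do not anticipate any genuine obstacle here: the mathematical content of Hofmann--Mislove has been absorbed into Lemma~\ref{lemma:scottopenIFFcoinitial} and the two theorems built on it, and what remains is a formal concatenation. The only step that merits a sentence is the identification $Y\cong\mathrm{pt}(L)$ in part~(2), which is the standard fact that a space is sober precisely when it is recovered as the point space of its own frame of opens.
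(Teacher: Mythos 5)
Your proposal is correct and matches the paper's own proof: part (1) is exactly the composition of Theorems~\ref{thm:coinitialisscott} and~\ref{thm:coinitialIFFcompactsaturated}, and part (2) uses, as the paper does, the standard fact that a sober space is homeomorphic to the space of points of its frame of opens before invoking part (1). The explicit tracing of the composite isomorphism $F\mapsto\bigcap\{\zeta(a):a\in F\}$ is a nice touch but adds nothing beyond what the cited theorems already give.
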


\begin{proof}
(1) Apply Theorems~\ref{thm:coinitialisscott} and~\ref{thm:coinitialIFFcompactsaturated}.

(2) Since $Y$ is sober, $Y$ is homeomorphic to the space of points of $L$ (see, e.g., \cite[p.~20]{PicadoPultr2012}). Now apply (1).  
\end{proof}

We conclude the paper with the following worthwhile consequence. We note that Corollary~\ref{cor:Scott-open-filter-is-intersection-of-completely-prime-filters} is proved in \cite[Lem.~8.2.2]{Vickers1989} using Zorn's lemma.

\begin{corollary}
Let $L$ be a frame and $X$ its Priestley space.
    \begin{enumerate}[ref=\thetheorem(\arabic*)]
        \item A Scott-open filter $F$ is completely prime iff $\min K_F$ is a singleton.
        \item Every Scott-open filter of $L$ is an intersection of completely prime filters of $L$. \label{cor:Scott-open-filter-is-intersection-of-completely-prime-filters}
    \end{enumerate}
\end{corollary}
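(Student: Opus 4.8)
The plan is to read both statements directly off the isomorphism $F\mapsto K_F$ between ${\sf Filt}(L)$ and ${\sf ClUp}(X)$ of Theorem~\ref{thm: filters}, combined with the characterization of Scott-open filters from Lemma~\ref{lemma:scottopenIFFcoinitial}. The only extra ingredient is to locate the completely prime filters inside this picture. A completely prime filter $F$ is prime and proper (it cannot contain $0=\bigvee\varnothing$), hence it is an element of $X$ lying in $Y$; and for any such $y\in Y$, viewed as a prime filter, its associated closed upset is
\[
K_y=\bigcap\{\sigma(a):a\in y\}=\{x\in X:y\subseteq x\}=\upset y,
\]
whose unique minimal point is $y$. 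Conversely, $F_{\upset y}=\{a\in L:\upset y\subseteq\sigma(a)\}=\{a\in L:a\in y\}=y$ since each $\sigma(a)$ is an upset. Thus, under the isomorphism of Theorem~\ref{thm: filters}, the completely prime filters of $L$ correspond precisely to the closed upsets of the form $\upset y$ with $y\in Y$; equivalently, to the S-upsets whose set of minimal points is a singleton.

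For (1), let $F$ be a Scott-open filter. If $F$ is completely prime, then by the above $F\in Y\subseteq X$ and $K_F=\upset F$, so $\min K_F=\{F\}$ is a singleton. Conversely, if $\min K_F=\{m\}$ is a singleton, then $m\in Y$ because $F$ is Scott-open (Lemma~\ref{lemma:scottopenIFFcoinitial}), and $K_F=\upset\min K_F=\upset m=K_m$ (using that a closed upset equals the upset of its minimal points, as recalled at the start of this section); hence $F=F_{K_F}=F_{K_m}=m$ is completely prime.

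For (2), let $F$ be a Scott-open filter, so $K_F$ is an S-upset with $K_F=\upset\min K_F$ and $\min K_F\subseteq Y$. Since each $\sigma(a)$ is an upset, for every $a\in L$ we have $K_F\subseteq\sigma(a)$ iff $m\in\sigma(a)$ for all $m\in\min K_F$ iff $a\in m$ for all $m\in\min K_F$. Consequently
\[
F=F_{K_F}=\{a\in L:K_F\subseteq\sigma(a)\}=\bigcap_{m\in\min K_F}m,
\]
and since $\min K_F\subseteq Y$, each $m\in\min K_F$ is a completely prime filter of $L$; so $F$ is an intersection of completely prime filters. (If $F=L$ this is the empty intersection; otherwise $K_F$ is a nonempty closed upset, so $\min K_F\ne\varnothing$ and the decomposition is non-degenerate.)

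I do not expect a genuine obstacle: both parts are bookkeeping with the assignments $F\mapsto K_F$ and $K\mapsto F_K$ together with Lemma~\ref{lemma:scottopenIFFcoinitial}. The one point that warrants care is the identity $K_y=\upset y$ for a point $y$ — equivalently, recognizing that a Scott-open filter is completely prime exactly when its closed upset is principal — after which (1) is immediate and (2) follows by evaluating $F_{K_F}$ on the decomposition $K_F=\upset\min K_F$. One could instead argue through Theorems~\ref{thm:coinitialisscott} and~\ref{thm:coinitialIFFcompactsaturated}, noting that completely prime filters correspond to the principal upsets $\upset y$ ($y\in Y$) in ${\sf KSat}(Y)$, but the route above is shorter.
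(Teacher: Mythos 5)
Your proposal is correct, and it rests on the same two pillars as the paper's own proof: the Priestley correspondence $F\mapsto K_F$ of Theorem~\ref{thm: filters} together with the decomposition $K_F=\upset\min K_F$, and the fact that Scott-openness forces $\min K_F\subseteq Y$ (Lemma~\ref{lemma:scottopenIFFcoinitial}). The differences are minor rather than a genuinely different route: for (1) the paper invokes the known facts that a Scott-open filter is completely prime iff it is prime and that $F$ is prime iff $\min K_F$ is a singleton (citing \cite[Cor.~6.7]{BezhanishviliGabelaiaKurz2010}), whereas you re-derive this directly by identifying completely prime filters with points $y\in Y$ and their associated closed upsets with the principal upsets $\upset y$ (using $K_y=\upset y$ and $F_{\upset y}=y$); for (2) the paper separates each $a\notin F$ by some $y\in\min K_F$ with $y\notin\sigma(a)$, while you package the same argument as the explicit identity $F=F_{K_F}=\bigcap_{m\in\min K_F}m$. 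Your version is slightly more self-contained and even pins down the intersection in (2) exactly, including the degenerate case $F=L$, but conceptually it is the paper's argument in different bookkeeping.
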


\begin{proof}
(1) It is well known and easy to see that a Scott-open filter is completely prime iff it is prime. Thus, $F$ is completely prime iff $\min K_F$ is a singleton (see, e.g., \cite[Cor.~6.7]{BezhanishviliGabelaiaKurz2010}).

(2) Let $F$ be Scott-open and $a\notin F$. Then $K_F\not\subseteq\sigma(a)$. Therefore, there is $y\in\min K_F$ with $y\notin\sigma(a)$. 
By Lemma~\ref{lemma:scottopenIFFcoinitial}, $y\in Y$, so $y$ is completely prime. Moreover, $F\subseteq y$ because $y \in K_F$. Furthermore, $a\notin y$ since $y \not\in \sigma(a)$. Thus, there is a completely prime filter $y$ containing $F$ and missing $a$. Consequently, $F$ is the intersection of completely prime filters of $L$ containing~$F$.
\end{proof}

\bibliographystyle{abbrv}

\end{document}